\def\Com{\mathop {\fam0 Com}\nolimits}
\def\As {\mathop {\fam0 As }\nolimits}
\def\Lie{\mathop {\fam0 Lie}\nolimits}
\def\Var{\mathop {\fam0 Var}\nolimits}
\def\Nov{\mathop {\fam0 Nov}\nolimits}
\def\Der{\mathop {\fam0 Der}\nolimits}
\def\id{\mathop {\fam0 id}\nolimits}
\def\Hom{\mathop {\fam0 Hom}\nolimits}
\newtheorem{lemma}{Lemma}
\newtheorem{theorem}{Theorem}
\theoremstyle{definition}
\newtheorem{remark}{Remark}
\newtheorem{example}{Example}
\begin{document}

\title{Derived identities of differential algebras}

\author{P. S. Kolesnikov}

\address{Sobolev Institute of Mathematics}
\email{pavelsk@math.nsc.ru}

\begin{abstract}
Suppose $A$ is a not necessarily associative algebra with a derivation~$d$.
Then $A$ may be considered as a system with two binary operations $\succ $ and 
$\prec $ defined by $x\succ y = d(x)y$, $x\prec y = xd(y)$, $x,y\in A$. 
Suppose $A$ satisfies some multi-linear polynomial identities. 
We show how to find the identities that hold for operations $\prec $ and $\succ $.
It turns out that if $A$ belongs to a variety governed by an operad $\Var $ 
then $\succ $ and $\prec $ satisfy the defining
relations of the operad $\Var\circ \Nov$, where $\circ $ is the Manin white product of operads, 
$\Nov $ is the operad of Novikov algebras. Moreover, there are no other independent identities that hold 
for operations $\succ $, $\prec $ on a differential $\Var $-algebra.
\end{abstract}

\keywords{Derivation \and Operad \and Identity \and Manin product \and Novikov algebra}
%\subjclass{*** \and *** \and ***}
 
 \maketitle
 
\section{Introduction}

Let $A$ be an algebra over a field $\Bbbk $, i.e., a linear space equipped with a
binary linear operation (multiplication) $\cdot : A\otimes A \to A$. 
Suppose $T$ is a linear operator on $A$, and let $\prec$ and $ \succ $ be two 
new linear maps $A\otimes A\to A$ defined by
\[
a\prec b = a\cdot T(b), \quad a\succ b= T(a)\cdot b, \quad a,b\in A.
\]
Denote the system $(A,\prec, \succ)$ by $A^{(T)}$.
If the initial algebra $A$ satisfies a polynomial identity then what could be said about $A^{(T)}$? 
The answer is known if $T$ is a Rota---Baxter operator \cite{BGuoNi13}, \cite{GubKol13}, \cite{FardGuo07} or 
averaging operator \cite{GubKol14}. In these cases, the identities of $A^{(T)}$ may be obtained by 
means of categoric procedures (black and white Manin products of operads \cite{GK94}). 

The purpose of this note is to consider the case when $T$ is a derivation (or generalized derivation). 
It is well-known \cite{DzhLowf09}
that a commutative (and  associative) algebra $A$ with a derivation $d$ induces Novikov 
algebra structure on $A^{(d)}$, 
assuming $(a\succ b)=(b\prec a)$. Conversely, if an identity holds on $A^{(d)}$ for an arbitrary 
commutative algebra $A$ with a derivation $d$ then this identity is a consequence of Novikov 
identities. 

In this paper, we generalize this observation for an arbitrary variety $\Var $ of algebras. Namely, 
if an identity $f$ holds on $A^{(d)}$ for every $\Var $-algebra $A$ with a derivation $d$
then we say $f$ is a derived identity of $\Var $. For $\Var = \As$, some derived identities were 
found in \cite{Lod10}. We show that for a multi-linear variety $\Var $ the set of derived identities 
coincides with the set of relations on the operad $\Var\circ \Nov$, where 
$\Var $ and $\Nov $ are the operads governing the varieties $\Var $ and $\Nov$, respectively, 
$\circ $ is the Manin white product of operads.

Calculation of Manin products is a relatively simple linear algebra problem, it is based on finding 
intersections of vector spaces. Therefore, our result provides an easy way for finding a complete list 
of derived identities for an arbitrary binary operad.

\section{White product of operads}

Suppose $\Var $ is a multi-linear variety of algebras, i.e., 
a class of all algebras that satisfy a given family of multi-linear 
identities (over a field $\Bbbk $ of characteristic zero, every variety is multi-linear). 
Fix a countable set of variables $X=\{x_1,x_2,\dots \}$ and denote 
\[
 \Var(n) = M_n(X)/M_n(X)\cap T_{\Var}(X), 
\]
where $M_n(X)$ is the space of all (non-associative) multi-linear polynomials of degree $n$ 
in $x_1,\dots, x_n$, $T_{\Var}(X)$ is the T-ideal of all identities that hold in $\Var $. 

The collection of spaces $(\Var(n))_{n\ge 1}$ forms a symmetric operad relative to the natural 
composition rule and symmetric group action (see, e.g., \cite{LodVal08}). We will denote this operad by the same symbol $\Var $. 
Every algebra $A\in \Var $ may be considered as a morphism of multi-categories
$\Var \to \Hom_\Bbbk$, where $\Hom_\Bbbk $ stands for the multi-category of linear spaces
over $\Bbbk $ and multi-linear maps. 
The operad $\Var $ is a {\em binary} one, i.e., it is generated (as a symmetric operad) 
by the elements of $\Var(2)$. 

\begin{example}\label{exmp:Lie}
The operad $\Lie $ governing the variety of Lie algebras 
is generated by 1-dimensional space $\Lie(2)=\Bbbk\mu $, 
$\mu^{(12)}=-\mu$. The space $\Lie(1)$ is also 1-dimensional, it is spanned by 
the identity $\id $ of the operad.
If we identify $\id $ with $x_1$ and $\mu $ with $[x_1x_2]$ then 
\[
 \mu(\id, \mu) = [x_1[x_2x_3]], \quad \mu(\mu,\id ) = [[x_1x_2]x_3],  
\]
so the Jacobi identity may be expressed as 
\[
 \mu(\id,\mu) - \mu(\id,\mu)^{(12)} = \mu(\mu, \id). 
\]
\end{example}
 
\begin{example}\label{exmp:As} 
 The operad $\As $ governing the variety of associative algebras 
 is generated by 2-dimensional space $\As(2)$ spanned by $\nu=(x_1x_2)$
 and $\nu^{(12)}=(x_2x_1)$. Associativity relations form an $S_3$-submodule 
 in $M_3(X)$ spanned by 
 \[
  \nu(\nu,\id) = \nu(\id,\nu).
 \]
\end{example}

\begin{example}\label{exmp:Nov}
{\em Novikov algebra} is a linear space with a multiplication
satisfying the following axioms:
\begin{gather}
 (x_1x_2)x_3 - x_1(x_2x_3) = (x_2x_1)x_3 - x_2(x_1x_3)   \label{eq:LSymm},\\
 (x_1x_2)x_3 = (x_1x_3)x_2. \label{eq:RComm}
\end{gather} 
The corresponding operad is generated by 2-dimensional $\Nov(2)$ 
spanned by $\nu = (x_1x_2)$ and $\nu^{(12)}=(x_2x_1)$. Defining 
identities of the variety $\Nov $ may be expressed as 
\[
\begin{gathered}
\nu(\nu, \id) - \nu(\id ,\nu) = \nu(\nu^{(12)}, \id ) - \nu^{(12)}(\nu, \id)^{(23)}, \\
\nu(\nu, \id) = \nu(\nu, \id)^{(23)}.
\end{gathered}
\]
\end{example}

Let $\Var_1$ and $\Var_2$ be two  operads. Then the family of spaces
$(\Var_1(n)\otimes \Var_2(n))_{n\ge 1}$ is an operad relative to 
the natural (componentwise) composition and symmetric group action
(known as the {\em Hadamard product} of $\Var_1$ and $\Var_2$). 
Even if $\Var_1$ and $\Var_2$ were binary operads, their Hadamard product
$\Var_1\otimes \Var_2$
 may  be non-binary. The sub-operad of $\Var_1\otimes \Var_2$ generated by 
 $\Var_1(2)\otimes \Var_2(2)$ is known as {\em Manin white product} of $\Var_1$ and $\Var_2$, 
 it is denoted $\Var_1\circ \Var_2$ \cite{GK94}. 
 
\begin{example}\label{exmp:Lie-Nov}
 The operad $\Lie \circ \Nov$ is isomorphic to the operad governing the class of all 
 algebras ({\em magmatic} operad).
\end{example}

Indeed, both $\Lie $ and $\Nov $ are quadratic operad, and so is $\Lie\circ \Nov$ \cite{GK94}. 
Let $\mu $ and $\nu $ be the generators of $\Lie$ and $\Nov$. 
It is enough to find the defining identities of $\Lie \circ \Nov$ that are quadratic with 
respect to $\mu $ and $\nu $. 

Identify $\mu\otimes \nu $ with $[x_1\prec x_2]$, then $\mu\otimes \nu^{(12)} = -(\mu\otimes \nu)^{(12)}$
corresponds to $-[x_2\prec x_1]$. Hence, $(\Lie\circ \Nov)(n)$ is an image of the space $M_n(X)$ 
of all multi-linear non-associative polynomials of degree $n$ in $X=\{x_1,x_2,\dots \}$ relative to the 
operation $[\cdot \prec \cdot ]$. Calculating the  compositions
$\mu(\mu,\id)\otimes \nu(\nu,\id)$ and $\mu(\id,\mu)\otimes \nu(\id,\nu)$, we obtain
\[
\begin{gathered}
m_1= [[x_1\prec x_2]\prec x_3] = [[x_1x_2]x_3]\otimes (x_1x_2)x_3, \\
m_2= [x_1\prec [x_2\prec x_3]] = [x_1[x_2x_3]]\otimes x_1(x_2x_3), 
\end{gathered} 
\]
It remains to find the intersection of the $S_3$-submodule generated by $m_1$ and $m_2$ 
in $M_3(X)\otimes M_3(X)$ with the kernel of the projection 
$M_3(X)\otimes M_3(X)\to \Lie(3)\otimes \Nov(3)$. Straightforward calculation
shows the intersection is zero. Hence, the operation $[\cdot \prec \cdot]$
satisfies no identities.

\begin{example}\label{exmp:As-Nov}
 The operad $\As \circ \Nov$ generated by 4-dimensional space $(\As\circ \Nov)(2)$
 spanned by $(x_1\succ x_2)$, $(x_2\succ x_1)$, $(x_1\prec x_2)$, $(x_2\prec x_1)$
 relative to the following identities:
 \begin{gather}
  (x_1\succ x_2)\prec x_3 - x_1\succ (x_2\prec x_3)=0, \label{eq:L-alg} \\
  (x_1\prec x_2)\prec x_3 -x_1\prec (x_2\succ x_3)+ (x_1\prec x_2)\succ x_3 - x_1\succ (x_2\succ x_3)=0.
								  \label{eq:LodDer}
 \end{gather}
\end{example}

This result may be checked with a straightforward computation, as in Example \ref{exmp:Lie-Nov}. 
Namely, one has to find the intersection of the $S_3$-submodule in $M_3(X)\otimes M_3(X)$
generated by 
\[
\begin{gathered}
 (x_1\prec x_2)\prec x_3 = (x_1x_2)x_3\otimes (x_1x_2)x_3, \\
 x_1\succ (x_2\succ x_3) = x_1(x_2x_3)\otimes (x_3x_2)x_1, \\
 (x_1\succ x_2)\prec x_3 = (x_1x_2)x_3\otimes (x_2x_1)x_3, \\
 x_1\succ (x_2\prec x_3) = x_1(x_2x_3)\otimes (x_2x_3)x_1, \\
 (x_1\prec x_2)\succ x_3 = (x_1x_2)x_3 \otimes x_3(x_1x_2), \\
 x_1\prec (x_2\succ x_3) = x_1(x_2x_3)\otimes x_1(x_3x_2), \\
 x_1\prec (x_2\prec x_3) = x_1(x_2x_3)\otimes x_1(x_2x_3), \\ 
 (x_1\succ x_2)\succ x_3 = (x_1x_2)x_3\otimes x_3(x_2x_1)
\end{gathered}
\]
with the kernel of the projection $M_3(X)\otimes M_3(X)\to \As(3)\otimes \Nov(3)$.

\section{Derived identities}

Given a (non-associative) algebra $A$ with a derivation $d$, denote by 
$A^{(d)}$ the same linear space $A$ considered as a system with two binary 
linear operations of multiplication  $(\cdot \prec \cdot)$ 
and $(\cdot \succ \cdot)$ defined by
\[
 x\prec y = xd(y), \quad x\succ y = d(x)y,\quad x,y\in A.
\]

Let $\Var $ be a multi-linear variety of algebras. As above, denote by $T_{\Var}(X)$ 
the T-ideal of all identities in a set of variables $X$ that hold on $\Var $. 
A non-associative polynomial $f(x_1,\dots, x_n)$ in two operations
of multiplication $\prec $ and $\succ $ is called 
a {\em derived identity} of $\Var $ if for every $A\in \Var $ and for every derivation 
$d\in \Der (A)$ the algebra $A^{(d)}$ satisfies the identity $f(x_1,\dots ,x_n)=0$.
Obviously, the set of all derived identities is a T-ideal of the algebra of non-associative 
polynomials in two operations.

For example,  $(x\succ y)=(y\prec x)$ is a derived identity of $\Com $. 
Moreover, the operation $(\cdot \prec \cdot )$ satisfies
the axioms of Novikov algebras \eqref{eq:LSymm} and \eqref{eq:RComm}. 
It was actually shown in \cite{DzhLowf09} that the entire T-ideal of derived 
identities of $\Com $ is generated by these identities. 

For the variety of associative algebras, it was mentioned in 
\cite{Lod10} that 
\eqref{eq:L-alg} and \eqref{eq:LodDer} are derived identities of~$\As $.

\begin{remark}[c.f. \cite{GuoKeig08}]
For a multi-linear variety $\Var$, the
free differential $\Var$-algebra generated by a set $X$ 
with one derivation is nothing but 
$\Var\<d^\omega X\>$, where $d^\omega X = \{d^s(x) \mid s\ge 0, x\in X\}$.
\end{remark}

Indeed, consider the free magmatic algebra $F(X;d)=M\<d^\omega X\>$ and define a linear map 
$d: F(X;d)\to F(X;d)$ in such a way that $d(d^s(x))=d^{s+1}(x)$, $d(uv)=d(u)v+ud(v)$.
Since $\Var $ is defined by multi-linear relations, the T-ideal $T_{\Var} (d^\omega X)$ 
is $d$-invariant. Hence, $U=F(X;d)/T_{\Var}(d^\omega X)$ is a $\Var $-algebra with a derivation~$d$.
Since all relations from $T_{\Var}(d^\omega X)$ hold in every differential $\Var$-algebra, 
$U$ is free. 

Let $\mathcal N_{\Var }$ be the class of differential $\Var $-algebras with one locally nilpotent 
derivation, i.e., for every $A\in \mathcal N_{\Var}$ and for every $a\in A$ there 
exists $n\ge 1$ such that $d^n(a)=0$. 

The following statement shows why the variety generated by $\mathcal N_{\Var }$ coincides 
with the class of all differential $\Var $-algebras. 

\begin{lemma}\label{lem:LocNilp}
Suppose $f=f(x_1,\dots, x_n)$ is a multi-linear identity that 
holds on $A^{(d)}$ for all $A\in \mathcal N_{\Var }$. 
Then $f$ is a derived identity of $\Var $.
\end{lemma}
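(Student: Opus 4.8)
The plan is to reduce the general case to the locally nilpotent case by a standard polynomial-identity argument: an identity is "derived" precisely when it is a consequence of the multilinear identities defining $\Var$, and those can be tested on a sufficiently large supply of algebras. So I would argue that if $f$ fails on some differential $\Var$-algebra $A$ with derivation $d$ — i.e. there exist $a_1,\dots,a_n\in A$ with $f(a_1,\dots,a_n)\ne 0$ in $A^{(d)}$ — then I can produce an algebra in $\mathcal N_{\Var}$ on which $f$ still fails, contradicting the hypothesis. Since $f$ is multilinear of degree $n$, only finitely many elements $d^s(a_i)$ are involved in evaluating $f(a_1,\dots,a_n)$ and checking it is nonzero; let $S$ be the (finite) set of all $a_i$ together with all their iterated $d$-images that actually occur.

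First I would pass to a finitely generated situation: let $B$ be the subalgebra (in the magmatic sense, closed under the single product of $A$) generated by $S$; it is still a $\Var$-algebra, but $d$ need not restrict to $B$. To fix this, I would instead work with the free differential $\Var$-algebra on the variables indexed by the $a_i$, namely $\Var\langle d^\omega Y\rangle$ with $Y=\{y_1,\dots,y_n\}$, which maps onto the differential subalgebra of $A$ generated by $a_1,\dots,a_n$ via $y_i\mapsto a_i$; by construction $f(y_1,\dots,y_n)\ne 0$ in $(\Var\langle d^\omega Y\rangle)^{(d)}$, because its image in $A^{(d)}$ is nonzero. So it suffices to show $f$ vanishes on $A^{(d)}$ for $A=\Var\langle d^\omega Y\rangle$, or rather to derive a contradiction with that nonvanishing.

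The main step is to introduce a grading/filtration that lets me truncate $d$ to a nilpotent one. On the free differential $\Var$-algebra I would assign weight $s$ to each generator $d^s(y_i)$ and extend additively over the product; then $d$ raises weight by exactly $1$, and $\Var\langle d^\omega Y\rangle = \bigoplus_{w\ge 0} V_w$ is a graded differential algebra. The element $f(y_1,\dots,y_n)$, being multilinear of degree $n$ in the $y_i$ with each $y_i$ replaced by something of the form $d^{s}(y_i)$, lies in the finite sum $\bigoplus_{w\le N} V_w$ for some $N$ depending only on $f$. Now consider the quotient $A_N = \Var\langle d^\omega Y\rangle / I_N$, where $I_N=\bigoplus_{w> N} V_w$ is the ideal of elements of weight exceeding $N$ — this is an ideal because the product only adds weights and $d$ only increases weight, and it is a $T_{\Var}$-invariant submodule automatically. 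On $A_N$ the induced derivation $\bar d$ satisfies $\bar d^{\,N+1}=0$, so $A_N\in\mathcal N_{\Var}$; yet the image of $f(y_1,\dots,y_n)$ in $A_N$ is nonzero because $f(y_1,\dots,y_n)$ sits in weights $\le N$ and hence is not killed by passing mod $I_N$. This contradicts the hypothesis that $f$ holds on $A^{(d)}$ for all $A\in\mathcal N_{\Var}$, completing the proof.

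The step I expect to be the genuine obstacle is verifying cleanly that $I_N$ is a two-sided ideal closed under $d$ \emph{and} that the projection $\Var\langle d^\omega Y\rangle\to A_N$ does not collapse $f(y_1,\dots,y_n)$ — i.e. that the weight grading on the free magmatic differential algebra descends correctly to $\Var\langle d^\omega Y\rangle$, which uses that $T_{\Var}$ is generated by multilinear (hence homogeneous in the $y_i$, and compatible with the $d$-weight) identities, exactly as in the Remark preceding the lemma. Once the grading is in place everything else is bookkeeping.
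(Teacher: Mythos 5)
Your proof is correct and follows essentially the same route as the paper: reduce to the free differential $\Var$-algebra, truncate it to get a locally nilpotent derivation, and use the fact that a multi-linear identity of degree $n$ has total $d$-degree $n-1$ to see that the truncation does not affect it. The only cosmetic difference is that you truncate by total $d$-weight ($I_N=\bigoplus_{w>N}V_w$) while the paper imposes $d^n(x)=0$ on the generators; your weight-grading argument is in fact exactly the justification the paper tacitly uses for its claim that an element of $I_n$ of $d$-degree less than $n$ lies in $T_{\Var}(d^\omega X)$.
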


\begin{proof}
Consider the free differential $\Var$-algebra $U_n$ generated 
by the set $X=\{x_1,x_2,\dots \}$
with one derivation $d$ modulo defining relations $d^{n}(x)=0$, $x\in X$. 

Then $U_n\in \mathcal N_{\Var }$. As a differential $\Var$-algebra, $U_n$ is a homomorphic 
image of the free magma $M\<d^{\omega }X\>$. 
Denote by $I_n$ the kernel of the homomorphism $M\<d^{\omega }X\> \to U_n$.
As an ideal of $M\<d^{\omega }X\>$, $I_n$ is a sum of two ideals: $I_n=T_{\Var}(d^\omega X)+N_n$,
where $N$ is generated by $d^{n+t}(x)$, $x\in X$, $t\ge 0$. 
Note that the last relations form a $d$-invariant subset of $M\<d^\omega X\>$, 
so the ideal $N_n$ is $d$-invariant. 

If a polynomial $F\in M\<d^\omega X\>$ belongs to $I_n$ and 
its degree in $d$ is less than $n$ then $F$ belongs to $T_{\Var }(d^\omega X)$. 
If $f=f(x_1,\dots , x_n)$ is a multi-linear identity in $U_n^{(d)}$ 
then the image $F$ of $f$ in $M\<d^\omega X\>$ has degree $n-1$ in $d$, 
so $F\in T_{\Var }(d^\omega )$. 
Hence, $f$ is a derived identity of $\Var $. 
\end{proof}

\begin{theorem}\label{thm:WhiteNov->Derived}
If a multi-linear identity $f$ holds on the variety 
governed by the operad $\Var\circ \Nov $ then 
$f$ is a derived identity of $\Var $. 
\end{theorem}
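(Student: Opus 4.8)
The plan is to invoke Lemma~\ref{lem:LocNilp}, which reduces the claim to the following assertion: for every $A\in\mathcal N_{\Var}$, i.e.\ every $\Var$-algebra carrying a \emph{locally nilpotent} derivation $d$, the system $A^{(d)}$ satisfies $f$. I shall prove this by realising $A^{(d)}$ as a subsystem of a genuine $(\Var\circ\Nov)$-algebra.

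First I build the ambient algebra. As recalled in the Introduction, $\Bbbk[t]$ is commutative associative with derivation $\partial=d/dt$, so $N:=(\Bbbk[t],\,f\circ g=f\,\partial(g))$ is a Novikov algebra. Since $\Var$ is multi-linear, the scalar extension $\widetilde A:=A\otimes\Bbbk[t]$ is again a $\Var$-algebra — every $w\in\Var(2)$ extends to $w_{\star}(a\otimes f,\,b\otimes g)=w(a,b)\otimes fg$, and likewise in higher arity — and $D:=\id_A\otimes\partial$ is a derivation of $\widetilde A$. By the standard compatibility of the Hadamard product with algebra structures (a $\Var$-algebra $A$ together with a $\Nov$-algebra $N$ yields a $\Var\otimes\Nov$-algebra on $A\otimes N$, cf.\ \cite{GK94}), $\widetilde A$ is a $\Var\otimes\Nov$-algebra, hence — restricting the structure to the sub-operad generated by $\Var(2)\otimes\Nov(2)$ — a $\Var\circ\Nov$-algebra. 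The crucial computation is that, for the multiplication $\mu$ of $A$, the generators $\mu\otimes\nu$ and $\mu\otimes\nu^{(12)}$ act on $\widetilde A$ exactly by
\[
  (\mu\otimes\nu)(u,v)=u\,D(v),\qquad (\mu\otimes\nu^{(12)})(u,v)=D(u)\,v,
\]
as one checks on decomposable elements: $(\mu\otimes\nu)(a\otimes t^i,\,b\otimes t^j)=\mu(a,b)\otimes t^i\,\partial(t^j)=\mu_{\star}\big(a\otimes t^i,\,D(b\otimes t^j)\big)$, and similarly for $\mu\otimes\nu^{(12)}$. Consequently $\widetilde A^{(D)}$, the system with $u\prec v=u\,D(v)$ and $u\succ v=D(u)\,v$, is precisely this $\Var\circ\Nov$-algebra viewed via its operations $\mu\otimes\nu$ and $\mu\otimes\nu^{(12)}$, now renamed $\prec$ and $\succ$; it therefore satisfies every identity of $\Var\circ\Nov$, in particular $f$.

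It remains to embed $A^{(d)}$ into $\widetilde A^{(D)}$. Put $\theta\colon A\to\widetilde A$, $\theta(x)=\sum_{n\ge 0}\frac1{n!}\,d^{n}(x)\otimes t^{n}$; the sum is finite because $d$ is locally nilpotent, and $\theta$ is injective since the $t^{0}$-coefficient of $\theta(x)$ is $x$. Comparing coefficients of $t^{n}$ and using the higher Leibniz rule $d^{n}\big(w(a_1,\dots,a_k)\big)=\sum_{i_1+\cdots+i_k=n}\binom{n}{i_1,\dots,i_k}\,w(d^{i_1}a_1,\dots,d^{i_k}a_k)$, one finds that $\theta$ is a homomorphism of $\Var$-algebras $A\to\widetilde A$ (it is the formal exponential $e^{td}$) and that $D\circ\theta=\theta\circ d$. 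Hence
\[
  \theta(x)\prec\theta(y)=\theta(x)\,D(\theta(y))=\theta(x)\,\theta(d(y))=\theta\big(x\,d(y)\big)=\theta(x\prec y),
\]
and in the same way $\theta(x)\succ\theta(y)=\theta(x\succ y)$. Thus $\theta$ is an injective homomorphism of systems in the two operations $\prec,\succ$, so $A^{(d)}$ is isomorphic to a subsystem of $\widetilde A^{(D)}$; since the latter satisfies $f$, so does $A^{(d)}$. As $A\in\mathcal N_{\Var}$ was arbitrary, Lemma~\ref{lem:LocNilp} shows that $f$ is a derived identity of $\Var$.

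As for where the work lies: the manipulations with $\theta$ — injectivity, the higher Leibniz rule, the intertwining $D\theta=\theta d$ — are routine in characteristic zero. The one step that genuinely must be pinned down, and which I regard as the heart of the matter, is the identification in the second paragraph: that the $\Var\circ\Nov$-algebra structure which the Hadamard product forces on $A\otimes\Bbbk[t]$ through the Novikov algebra $N=(\Bbbk[t],\,f\circ g=f\,\partial(g))$ is nothing but the $(\prec,\succ)$-structure attached to the derivation $D$ on the scalar extension. All the rest is bookkeeping.
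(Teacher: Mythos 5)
Your proof is correct and follows essentially the same route as the paper: the paper also reduces to locally nilpotent derivations via Lemma~\ref{lem:LocNilp}, equips $A\otimes\Bbbk[x]$ with the $(\Var\circ\Nov)$-structure coming from the Novikov algebra $f\cdot g=f\partial(g)$ (written there in the divided-power basis $x^{(n)}=x^n/n!$), and embeds $A^{(d)}$ by the same exponential map $a\mapsto\sum_s d^s(a)\otimes x^{(s)}$. Your reformulation of the coefficient computation as ``$\theta$ is a $\Var$-morphism intertwining $d$ with $D$'' is a slightly cleaner packaging of the paper's explicit binomial identity, but the argument is the same.
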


\begin{proof}
For every $A\in \mathcal N_{\Var }$ we may construct an algebra in the variety 
$\Var \circ \Nov $ as follows. Consider the linear space $H$ spanned by elements $x^{(n)}$, $n\ge 0$, with 
multiplication 
\[
 x^{(n)}\cdot x^{(m)} = \binom{n+m-1}{n} x^{(n+m-1)}.
\]
This is a Novikov algebra (in characteristic 0, this is just the ordinary polynomial algebra).
Denote $\hat A = A\otimes \Bbbk [x]$ and define operations $\prec $ and $\succ $ on $\hat A$ by
\[
 (a\otimes f)\prec (b\otimes g) = ab\otimes f\cdot g, 
 \quad 
 (a\otimes f)\succ (b\otimes g) = ab\otimes g\cdot f.
\]
The algebra $\hat A$ obtained belongs to the variety governed by the operad $\Var \circ \Nov $.
There is an injective map 
\begin{align}
 & A \to \hat A, \nonumber \\
 & a\mapsto \sum\limits_{s\ge 0} d^s(a)\otimes x^{(s)}, \label{eq:A-to-hatA}
\end{align}
which preserves operations $\prec $ and $\succ $. Indeed, apply \eqref{eq:A-to-hatA} to 
$a\prec b$, $a,b\in A$:
\begin{multline}\nonumber
(a\prec b)=ad(b) \mapsto \sum\limits_{s\ge 0}  d^s(ad(b))\otimes x^{(s)} 
= \sum\limits_{s,t\ge 0} \binom{s+t}{s} d^s(a)d^{t+1}(b)\otimes x^{(s+t)} \\
= \sum\limits_{s,t\ge 0}  d^s(a)d^{t+1}(b) \otimes x^{(s)}\cdot x^{(t+1)} 
=
\left(\sum\limits_{s\ge 0} d^s(a)\otimes x^{(s)}\right)
\prec
\left(\sum\limits_{t\ge 0} d^t(a)\otimes x^{(t)}\right).
\end{multline}
Therefore, $A^{(d)}$ is a subalgebra of a $(\Var\circ \Nov )$-algebra, 
so all 
defining identities of $\Var\circ \Nov $ hold on $A^{(d)}$.
Lemma \ref{lem:LocNilp} completes the proof.
\end{proof}

\begin{theorem}\label{thm:Converse}
Let $f$ be a  multi-linear derived identity of $\Var $. Then $f$ holds on all ($\Var\circ\Nov$)-algebras.
\end{theorem}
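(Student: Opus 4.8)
The plan is to establish the inclusion opposite to Theorem~\ref{thm:WhiteNov->Derived}: since the relations of $\Var\circ\Nov$ are already known to be derived identities, it suffices to prove that the \emph{free} $(\Var\circ\Nov)$-algebra $F$ on a countable set $X=\{x_1,x_2,\dots\}$ embeds, as an algebra with the two operations $\prec$ and $\succ$, into $B^{(d)}$ for some differential $\Var$-algebra $B$. Indeed, a multi-linear derived identity $f$ of $\Var$ then vanishes on $B^{(d)}$ (by the very definition of a derived identity, applied to $B\in\Var$ with its derivation) and hence on the subalgebra $F\subseteq B^{(d)}$; as $F$ is free on a countable set, $f$ is then a relation of the operad $\Var\circ\Nov$, that is, it holds on every $(\Var\circ\Nov)$-algebra.

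To build such a $B$ I would set $B=\Var\<X\>\otimes C$ with componentwise multiplication, where $\Var\<X\>$ is the free $\Var$-algebra on $X$ and $C=\Bbbk[d^\omega Y]$ is the free differential commutative associative algebra on a disjoint countable set $Y=\{y_1,y_2,\dots\}$ with its canonical derivation~$d$. Since $\Var$ is multi-linear and $C$ is commutative and associative, $B\in\Var$; moreover $\id\otimes d$ is a derivation of $B$, so $B$ is a differential $\Var$-algebra and, by Theorem~\ref{thm:WhiteNov->Derived}, $B^{(\id\otimes d)}$ is a $(\Var\circ\Nov)$-algebra with operations
\[
(a\otimes u)\prec(b\otimes v)=ab\otimes u\,d(v),\qquad (a\otimes u)\succ(b\otimes v)=ab\otimes d(u)\,v .
\]
Let $\Phi\colon F\to B^{(\id\otimes d)}$ be the unique morphism with $\Phi(x_i)=x_i\otimes y_i$.

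The main point is that $\Phi$ is injective, and for this it is enough to see that $\Phi$ is injective on each multi-linear component $F(n)=(\Var\circ\Nov)(n)$. An induction on the length of a multi-linear $(\prec,\succ)$-monomial $w$ of degree $n$, using the two formulas above, gives
\[
\Phi\bigl(w(x_1,\dots,x_n)\bigr)=w_{\Var}(x_1,\dots,x_n)\otimes w_{C}(y_1,\dots,y_n),
\]
where $w_{\Var}$ is obtained from $w$ by replacing \emph{both} $\prec$ and $\succ$ with the single multiplication of $\Var\<X\>$ (so $w_{\Var}$ depends only on the planar tree of $w$ and its leaf labels), and $w_{C}$ is $w$ evaluated in $C^{(d)}$ with $\prec=(\cdot)\,d(\cdot)$, $\succ=d(\cdot)\,(\cdot)$. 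By the theorem of Dzhumadildaev and L\"ofwall \cite{DzhLowf09}, the subalgebra of $C^{(d)}$ generated by $Y$ is the free Novikov algebra, so $w_{C}(y_1,\dots,y_n)$ faithfully represents the corresponding element of $\Nov(n)$. Therefore, on $F(n)$ the map $\Phi$ reads $\sum_w\lambda_w w\mapsto\sum_w\lambda_w\,\overline{w_{\Var}}\otimes\overline{w_{C}}$ with values in $\Var(n)\otimes\Nov(n)$, and this is precisely the canonical inclusion $(\Var\circ\Nov)(n)\hookrightarrow\Var(n)\otimes\Nov(n)$ defining the Manin white product (under which $\prec$ corresponds to $\mu\otimes\nu$ and $\succ$ to $\mu\otimes\nu^{(12)}$, with $\mu,\nu$ the generators of $\Var,\Nov$). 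Being an inclusion it is injective; hence $\Phi$ is injective, and the reduction of the first paragraph completes the proof.

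I expect the only genuine difficulty to lie in the third paragraph: one must carefully justify that the first tensor factor of $\Phi(w)$ forgets the difference between $\prec$ and $\succ$ while the second factor retains it, and then identify the resulting splitting with the componentwise operadic composition in the Hadamard product $\Var\otimes\Nov$ that underlies $\circ$. The sole external input is the commutative case \cite{DzhLowf09}; the other facts invoked — closure of $\Var$ under tensoring with a commutative associative differential algebra, and the explicit form of $B^{(d)}$ — are routine.
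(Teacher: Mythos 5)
Your proposal is correct and follows essentially the same route as the paper: both arguments embed the free $(\Var\circ\Nov)$-algebra, realized inside $\Var\<X\>\otimes\Nov\<X\>$, into the differential $\Var$-algebra $\Var\<X\>\otimes C$ via the Dzhumadildaev--L\"ofwall embedding of the free Novikov algebra into $C^{(d)}$. Your third paragraph merely spells out in more detail the identification of identities of $\Var\<X\>\otimes\Nov\<X\>$ with relations of the white product, which the paper takes as the definition.
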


\begin{proof}
Let $X=\{x_1,x_2,\dots \}$, and let $f(x_1,\dots , x_n)$ be a non-associative multi-linear polynomial 
in two operations of multiplication $\prec$ and $ \succ$. 
By definition, $f$ is an identity of the algebra $\Var\<X\>\otimes \Nov\<X\>$ 
with operations 
\[
(u\otimes f)\prec (v\otimes g)= (uv\otimes fg), 
\quad 
(u\otimes f)\succ (v\otimes g)= (uv\otimes gf)
\]
if and only if $f$ is a defining identity of $\Var\circ \Nov$.

It is well-known that $\Nov\<X\>$ is a subalgebra of $C^{(d)}$ for 
the free differential commutative algebra $C$ generated by $X$ with a derivation~$d$
\cite{DzhLowf09}. Therefore, 
$\Var\<X\>\otimes \Nov\<X\>$ is a subalgebra of $(\Var\<X\>\otimes C)^{(d)}$,
where $d(u\otimes f)=u\otimes d(f)$ for $u\in \Var\<X\>$, $f\in C$.
Hence, $\Var\<X\>\otimes \Nov\<X\>$ is embedded into a differential 
$\Var $-algebra.
\end{proof}

\begin{remark}
In this paper, we consider algebras with one binary operation, so $\Var $ has only one generator. 
However, Theorems~\ref{thm:WhiteNov->Derived}, \ref{thm:Converse} are easy to generalize for algebras 
with multiple binary operations, i.e., for an arbitrary binary operad $\Var $. 
\end{remark}

\begin{remark}
In the definition of a derived identity, a derivation $d$ of an algebra $A$ 
may be replaced with {\em generalized derivation}, 
i.e., a linear map $D: A\to A$ such that 
\[
D(ab)=D(a)b+aD(b)+\lambda ab, \quad a,b\in A,
\]
where $\lambda $ is a fixed scalar in $\Bbbk $. Indeed, it is enough to note that
 $D-\lambda \id$ is an ordinary derivation.
\end{remark}

\end{document}